\documentclass[12pt]{amsart}
\usepackage{graphicx}
\usepackage[headings]{fullpage}
\usepackage{amssymb,epic,eepic,epsfig,amsbsy,amsmath,amscd}
\numberwithin{equation}{section}
                        \textwidth16cm
                        \textheight23cm
                        \topmargin-1cm
                        \oddsidemargin 0.2cm
                        \evensidemargin 0.2cm
                        \theoremstyle{plain}
\usepackage{mathrsfs}

\newcommand\no[1]{}

\newtheorem{theorem}{Theorem}[section]
\newtheorem{thm}{Theorem}
\newtheorem{lemma}[theorem]{Lemma}

\newtheorem{proposition}[theorem]{Proposition}
\newtheorem{conjecture}{Conjecture}
\newtheorem{cor}{Corollary}

\theoremstyle{definition}

\def\BC{\mathbb C}

\def\BR{\mathbb R}

\def\fb{\mathfrak b}

\def\la{\langle}
\def\ra{\rangle}

\DeclareMathOperator{\tr}{\mathrm tr}

\def\ve{\varepsilon}
\def\be { \begin{equation} }
\def\ee { \end{equation} }

\begin{document}

\title[Nonabelian representations and signatures]{Nonabelian representations and signatures of double twist knots}

\author[Anh T. Tran]{Anh T. Tran}
\address{Department of Mathematical Sciences, University of Texas at Dallas, Richardson, TX 75080, USA}
\email{att140830@utdallas.edu}

\dedicatory{Dedicated to Professor Jozef Przytycki on the occasion of his 60th birthday}

\begin{abstract}
A conjecture of Riley about the relationship between real parabolic representations and signatures of two-bridge knots is verified for double twist knots.
\end{abstract}

\thanks{2010 {\em Mathematics Classification:} Primary 57N10. Secondary 57M25.\\
{\em Key words and phrases: nonabelian representation, parabolic representation, signature, double twist knot.}}

\maketitle

\section{Introduction}

The motivation of this note is a conjecture of Riley about the relationship between real parabolic representations and signatures of two-bridge knots. Two-bridge knots are those knots admitting a projection with only two mixama and two minima. The double branched cover of $S^3$ along a two-bridge knot is a lens space $L(p,q)$, which is obtained by doing a $p/q$ surgery on the unknot. Such a two-bridge knot is denoted by $\fb(p,q)$. Here $p$ and $q$ are relatively prime integers, and one can always assume that $p > q \ge 1$. It is known that $\fb(p',q')$ is ambient isotopic to $\fb(p,q)$ if and only if $p'=p$ and $q' \equiv q^{\pm 1} \pmod{p}$, see e.g. \cite{BZ}. The knot group of the two-bridge knot $\fb(p,q)$ has a presentation of the form $\la a, b \mid wa = b w \ra$ where $a,b$ are meridians, $w=a^{\ve_1} b^{\ve_2} \cdots a^{\ve_{p-2}}b^{\ve_{p-1}}$ and $\ve_j=(-1)^{\lfloor jq/p \rfloor}$. We will call this presentation the Schubert presentation of the knot group of $\fb(p,q)$. 

We now study representations of knot groups into $SL_2(\BC)$. A representation is called nonabelian if its image is a nonabelian subgroup of $SL_2(\BC)$. Suppose $\rho: \pi_1(\fb(p,q)) \to SL_2(\BC)$ is a nonabelian representation. Up to conjugation, one can assume that $$\rho(a) = \left[ \begin{array}{cc}
s & 1 \\
0 & s^{-1} \end{array} \right] \quad \text{and} \quad \rho(b) = \left[ \begin{array}{cc}
s & 0 \\
2-y & s^{-1} \end{array} \right]$$
where $s \not= 0$ and $y \not= 2$ satisfy the matrix equation $\rho(wa)=\rho(bw)$. Riley \cite{Ri} showed that this matrix equation is actually equivalent to a single polynomial equation $\widetilde{\Phi}_{\fb(p,q)}(s,y)=0$. The polynomial $\widetilde{\Phi}_{\fb(p,q)}(s,y)$ is called the Riley polynomial of $\fb(p,q)$. It can be chosen so that $\widetilde{\Phi}_{\fb(p,q)}(s,y)=\widetilde{\Phi}_{\fb(p,q)}(s^{-1},y)$, and hence it can be considered as a polynomial in the variables $x=s+s^{-1}$ and $y$.  Note that $x=\tr\rho(a)=\tr\rho(b)$ and $y=\tr\rho(ab^{-1})$. With these new variables, we write $\Phi_{\fb(p,q)}(x,y)$ for the Riley polynomial of $\fb(p,q)$. See \cite{Le} for an alternative description of $\Phi_{\fb(p,q)}(x,y)$ in terms of traces.

A nonabelian representation of a knot group into $SL_2(\BC)$ is called parabolic if the trace of a meridian is equal to $2$. The zero set in $\BC$ of the polynomial $\Phi_K(2,y)$ of a two-bridge knot $K$ describes the set of all parabolic representations of the knot group of $K$ into $SL_2(\BC)$, see \cite{Ri-parabolic}. Riley made the following conjecture about the relationship between the number of real roots of $\Phi_K(2,y)$ and the signature $\sigma(K)$ of $K$.

\begin{conjecture} \cite{Ri}
Suppose $K$ is a two-bridge knot. Then the equation $\Phi_K(2,y)=0$ has at least $\frac{1}{2}|\sigma(K)|$ real solutions.
\end{conjecture}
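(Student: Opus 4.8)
The plan is to treat the two sides of the inequality through the common continued-fraction data of $\fb(p,q)$, and to understand conceptually why a large signature should force many real parabolic representations. The guiding principle is Lin's theorem, which identifies $\tfrac12\sigma(K)$ with the signed count of conjugacy classes of irreducible trace-free $SU(2)$ representations of $\pi_1(S^3\setminus K)$; in particular the \emph{number} of such representations is at least $\tfrac12|\sigma(K)|$. For a two-bridge knot these are exactly the real solutions $y$ of $\Phi_{\fb(p,q)}(0,y)=0$ lying in the $SU(2)$ window, so the content of the conjecture is that passing from the trace-free slice $x=0$ to the parabolic slice $x=2$ does not destroy this lower bound.

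First I would make the signature side explicit by writing $\sigma(\fb(p,q))$ through the combinatorial formula attached to the sequence $\ve_j$ (equivalently, to the even continued fraction expansion of $p/q$): the symmetrized Seifert/Goeritz form is a tridiagonal integer matrix whose diagonal comes from the continued-fraction entries, and $\sigma$ is read from the sign pattern of its leading principal minors by Sylvester's law and the Jacobi/Sturm theorem. This exhibits $\tfrac12|\sigma|$ as a definite count of sign behaviour along the same word $w=a^{\ve_1}b^{\ve_2}\cdots b^{\ve_{p-1}}$ that defines the Riley polynomial.

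Second, on the representation side I would compute $\Phi_{\fb(p,q)}(2,y)$ from Riley's matrix equation as an entry of a product of $2\times2$ matrices over $\BR[y]$, one factor per syllable of $w$, producing a three-term recursion of the form $P_{i+1}=c_i(y)\,P_i-\ve_i\,P_{i-1}$ with $P_{(p-1)/2}=\Phi_{\fb(p,q)}(2,y)$. When the word is repetitive, as for the double twist knots of the present paper, the signs $\ve_j$ are constant on long blocks and this recursion collapses to genuine Chebyshev polynomials, whose real roots lie in a known interval and can be counted directly; comparing that count with the explicit signature gives the conjecture for those families. The strategy for the general case is to run the root-counting for $(P_i)$ in parallel with the minor sign-pattern for the signature form, using the intermediate value theorem and interlacing to convert each prescribed sign change into a distinct real zero of $P_{(p-1)/2}$.

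The main obstacle is precisely that for a general $\fb(p,q)$ the signs $\ve_j$ are irregular, so the Riley recursion is no longer an orthogonal-polynomial recursion and reality of the roots is not automatic; simultaneously the continuity/monotonicity argument needed to carry the $SU(2)$ lower bound from $x=0$ to $x=2$ must control real branches of the curve $\Phi_{\fb(p,q)}(x,y)=0$ that may turn back before reaching $x=2$. I expect the feasible route to be an induction on the length of the continued fraction, taking the double twist knots as the base building blocks and showing that inserting one further syllable changes the signature sign-pattern and the number of real roots of $\Phi_{\fb(p,q)}(2,y)$ in tandem, together with careful bookkeeping of repeated roots and of the asymptotics as $y\to\pm\infty$.
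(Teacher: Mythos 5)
The statement you have been asked to prove is a conjecture, and the paper does not prove it either: it only \emph{verifies} it for the four families of double twist knots, as a corollary of Theorems \ref{main1} and \ref{main2}, by computing each Riley polynomial in the closed form $S_n(t)-\mu S_{n-1}(t)$ with $t,\mu$ explicit polynomials in $y$, and then counting real roots by hand. Measured against that, your proposal is a research program rather than a proof: every step that goes beyond the double twist case is stated as an expectation, and the pivotal step fails as formulated. Lin's theorem does give at least $\frac{1}{2}|\sigma(K)|$ irreducible trace-free $SU(2)$ representations, but for a two-bridge knot $\fb(p,q)$ the trace-free slice is already as real as it can be --- the $(p-1)/2$ binary dihedral characters account for real roots of $\Phi_{\fb(p,q)}(0,y)$ --- so at $x=0$ there is nothing to prove; the \emph{entire} content of the conjecture is whether real roots survive the deformation to the parabolic slice $x=2$, and they need not: pairs of real roots of $\Phi_K(x_0,y)$ can coalesce and leave the real axis as $x_0$ varies. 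The paper itself exhibits this failure of persistence: by Theorem \ref{main1}(ii), for $J(2m,-2n)$ the equation $\Phi_{J(2m,-2n)}(x_0,y)=0$ has \emph{no} real solutions for $\sqrt{4-(mn)^{-1}}<|x_0|\le 2$, although at $x_0=0$ it has real solutions (the dihedral characters, since $p=4mn+1\ge 5$). Here $\sigma=0$, so the conjecture is unharmed, but any argument of the form ``the $SU(2)$ count at $x=0$ transports to $x=2$'' is refuted, and you supply no replacement mechanism.

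The fallback tools you name do not repair this. Because the signs $\ve_i$ in the three-term recursion $P_{i+1}=c_i(y)P_i-\ve_i P_{i-1}$ are irregular for general $\fb(p,q)$ --- as you yourself observe --- the sequence $(P_i)$ is not an orthogonal-polynomial sequence, so real-rootedness and interlacing are simply false in general ($\Phi_K(2,y)$ typically has non-real roots), and the proposed Sylvester/Sturm bookkeeping on the signature form has no stated bridge to real zeros of $P_{(p-1)/2}$; the induction ``inserting one syllable changes both sides in tandem'' is asserted without even a candidate lemma. Where your sketch overlaps the paper --- the double twist families --- your intuition that the recursion ``collapses to Chebyshev polynomials'' is right in spirit, but note the mechanism is a composition, not a plain Chebyshev polynomial: the paper evaluates $f(y)=\Phi(x_0,y)$ at the finitely many preimages $y_j$ of the Chebyshev roots $t_j$ of $S_n(t)-S_{n-1}(t)$, proves the alternating signs $(-1)^{j-1}f(y_j)>0$, and applies the intermediate value theorem (Theorem \ref{main2}), supplemented by a monotonicity argument via the identity $(\mu^2+1-\mu t)S^2_{n-1}(t)=1$ in Theorem \ref{main1}. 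So even for the base case of your proposed induction, the actual root count requires these family-specific computations, which your outline does not reproduce.
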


In this note we will verify Conjecture 1 for a large class of two-bridge knots, called double twist knots. Let $J(k,l)$ be the two-bridge knot/link in Figure 1, where $k,l \not= 0$ denote 
the numbers of half twists in the boxes. Positive (resp. negative) numbers correspond 
to right-handed (resp. left-handed) twists. 
Note that $J(k,l)$ is a knot if and only if $kl$ is even. The knot $J(2,2n)$ is known as a twist knot. In general, the knot/link $J(k,l)$ is called a double twist knot/link. It is known that 
\begin{eqnarray*}
J(2m,2n) &=& \fb(4mn-1,4mn-2n-1),\\
J(2m,-2n) &=& \fb(4mn+1,4mn-2n+1),\\
J(2m+1,2n) &=& \fb(4mn+2n-1,4mn-1),\\
J(2m+1,-2n) &=& \fb(4mn+2n+1,4mn+1)
\end{eqnarray*}
for positive integers $m$ and $n$. Moreover, their signatures are $2$, $0$, $2-2n$ and $2n$ respectively, see e.g. \cite{Mu}. For more information about $J(k,l)$, see \cite{HS, MPL}.

\begin{figure}[th]
\centerline{\psfig{file=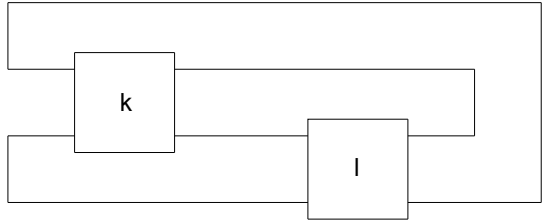,width=3.5in}}
\vspace*{8pt}
\caption{The two-bridge knot/link $J(k,l)$. }
\end{figure} 

The main results of this note are as follows.

\begin{thm} \label{main1}
Suppose $m$ and $n$ are positive integers. If $x_0 \in \BR$ satisfies $\sqrt{4-(mn)^{-1}}< |x_0| \le 2$, then 

(i) the equation $\Phi_{J(2m,2n)}(x_0,y)=0$ has exactly one real solution $y$, and

(ii) the equation $\Phi_{J(2m,-2n)}(x_0,y)=0$ has no real solutions $y$,

\end{thm}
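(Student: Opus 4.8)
The plan is to obtain an explicit recursive description of the Riley polynomial $\Phi_{J(2m,\pm 2n)}(x,y)$ and then analyze the sign behavior of $\Phi_{J(2m,\pm 2n)}(x_0,y)$ as a function of the real variable $y$ on the relevant range. Since $J(k,l)$ is built from two twist regions, the word $w$ decomposes into powers of two fixed words corresponding to the $k$- and $l$-twist boxes. Evaluating $\rho$ on these powers introduces Chebyshev-like polynomials: if $A$ is a matrix in $SL_2$ with trace $t$, then $A^n = S_{n-1}(t)A - S_{n-2}(t)I$, where $S_j$ are the Chebyshev polynomials of the second kind satisfying $S_{j}(t) = tS_{j-1}(t) - S_{j-2}(t)$. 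My first step would be to compute the two relevant traces (those of the matrices $\rho$ assigns to the building blocks of the two twist regions) in terms of $x$ and $y$, and thereby write $\Phi_{J(2m,2n)}(x,y)$ and $\Phi_{J(2m,-2n)}(x,y)$ as explicit expressions in $S_{m-1}, S_{n-1}$ (and neighbors) evaluated at these traces.

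Once I have such a closed form, I would substitute a fixed $x = x_0$ with $\sqrt{4 - (mn)^{-1}} < |x_0| \le 2$ and regard the result as a polynomial in $y$. The key is that the condition $|x_0| \le 2$ forces $x_0 = 2\cos\theta$ for real $\theta$, so that $S_{m-1}$ evaluated at the $x$-dependent trace stays in a controlled oscillatory regime, while the lower bound $\sqrt{4-(mn)^{-1}} < |x_0|$ is precisely what is needed to keep the argument of the Chebyshev polynomials on the correct side of $2$ (where $S_j$ is monotone and sign-definite) rather than in the oscillatory regime. I expect the trace of one twist-block to take the form $2 - (2-y)f(x_0)$ or similar, and the inequality on $x_0$ should guarantee this argument exceeds $2$ in absolute value exactly when needed. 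For part (i), I would show $\Phi_{J(2m,2n)}(x_0,y)$ is, up to sign, a polynomial whose leading behavior and value at the endpoints force exactly one sign change in $y$, hence exactly one real root; for part (ii) I would show $\Phi_{J(2m,-2n)}(x_0,y)$ is sign-definite (never zero) on all real $y$.

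Concretely, I anticipate reducing each statement to an inequality of the form $P(y)^2 < Q(y)$ or to showing a certain quadratic-in-$y$ discriminant is negative (for part (ii)) or that a product telescopes to give a single guaranteed crossing (for part (i)). The role of the precise threshold $\sqrt{4-(mn)^{-1}}$ is to make a discriminant or a coefficient change sign exactly there, so I would solve for the critical $x_0$ where the relevant inequality degenerates and verify it equals $\sqrt{4-(mn)^{-1}}$. Pinning down this threshold is where the factor $mn$ should enter, presumably as the product $S_{m-1}S_{n-1}$ or a bound like $|S_{m-1}(u)S_{n-1}(v)| \le mn$ near $u,v = \pm 2$.

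The main obstacle, I expect, is the bookkeeping that turns the abstract Chebyshev expression for $\Phi$ into a form where the sign analysis is transparent — in particular, correctly identifying which trace is constrained by the bound on $x_0$ and establishing the sharp estimate that produces exactly the threshold $\sqrt{4-(mn)^{-1}}$. The monotonicity/oscillation dichotomy for Chebyshev polynomials across $|t|=2$ is the essential analytic tool, and making it interact correctly with the sign of $(2-y)$ and the parity difference between the $+2n$ and $-2n$ cases is the delicate part; the $-2n$ case presumably flips a sign that converts "exactly one root" into "no roots."
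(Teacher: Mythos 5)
Your high-level frame --- write $\Phi_{J(2m,\pm 2n)}(x,y)$ in Chebyshev form $S_n(t)-\mu S_{n-1}(t)$, fix $x=x_0$, and do a sign analysis in the real variable $y$ --- is indeed the paper's frame, and your guess that the threshold comes from a product bound like $|S_{m-1}(u)S_{n-1}(v)|\le mn$ near $u,v=\pm 2$ is exactly right. But your proposed mechanism for exploiting the hypothesis on $x_0$ is wrong, and the step that actually drives the proof is missing. The role of $\sqrt{4-(mn)^{-1}}<|x_0|$ is \emph{not} to push the Chebyshev arguments out of the oscillatory window into the monotone regime $|t|>2$; in the paper, a putative root with $y\le 2$ forces \emph{both} $y$ and $t=2+(y-2)(y+2-x_0^2)S^2_{m-1}(y)$ into the oscillatory window $(x_0^2-2,\,2]$, and the contradiction is a magnitude estimate inside that window. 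The engine is an identity you never isolate: if $S_n(t)=\mu S_{n-1}(t)$, then the relation $S_n^2(t)+S_{n-1}^2(t)-tS_n(t)S_{n-1}(t)=1$ gives $(\mu^2+1-\mu t)S_{n-1}^2(t)=1$, and the crucial factorization $\mu^2+1-\mu t=(y+2-x_0^2)S^2_{m-1}(y)(t+2-x_0^2)$ turns this into $(y+2-x_0^2)S^2_{m-1}(y)(t+2-x_0^2)S^2_{n-1}(t)=1$. Only with this equation in hand do the bounds $|S_{m-1}(y)|\le m$, $|S_{n-1}(t)|\le n$ and $4-x_0^2<(mn)^{-1}$ yield a left side at most $(4-x_0^2)^2m^2n^2<1$, a contradiction. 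Without it, your anticipated inequality ``$P(y)^2<Q(y)$'' has no concrete realization, and a discriminant computation cannot substitute: $\Phi$ has $y$-degree growing with $m$ and $n$, so no quadratic discriminant controls its real roots.

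The second gap is that you have no argument for the region $y>2$, which is precisely where parts (i) and (ii) diverge. For $J(2m,-2n)$, when $y>2$ one gets $t>2$ and $\mu<1$ (the correction term in $\mu$ enters with opposite signs for the two knots --- this is the ``sign flip'' you gestured at, but it must be made explicit), whence $\Phi=S_n(t)-\mu S_{n-1}(t)>S_n(t)-S_{n-1}(t)>0$: no roots at all. For $J(2m,2n)$ one must show there is \emph{exactly one} root in $(2,\infty)$: the paper gets ``at most one'' because the auxiliary function $g(y)=(y+2-x_0^2)S^2_{m-1}(y)(t+2-x_0^2)S^2_{n-1}(t)-1$ is increasing on $(2,\infty)$ and every root of $f(y)=\Phi_{J(2m,2n)}(x_0,y)$ is a root of $g$; and ``at least one'' because $\lim_{y\to 2^+}f(y)=1-(4-x_0^2)mn>0$ (note the threshold is used a second time here) while $f(y)\to-\infty$ as $y\to\infty$. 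Your ``single guaranteed crossing'' intuition matches the intermediate-value half, but you propose nothing playing the role of the monotone function $g$ that excludes three or more crossings, so even granting your closed forms, the count ``exactly one'' in (i) would remain unproved.
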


\begin{thm} \label{main2}
Suppose $m$ and $n$ are positive integers. If $x_0 \in \BR$ satisfies $|x_0| \ge 2\cos \frac{\pi}{4m+2}$, then

(i) the equation $\Phi_{J(2m+1,2n)}(x_0,y)=0$ has at least $n-1$ real solutions $y$, and

(ii) the equation $\Phi_{J(2m+1,-2n)}(x_0,y)=0$ has at least $n$ real solutions $y$.
\end{thm}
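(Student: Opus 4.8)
The plan is to make the Riley polynomial $\Phi_{J(2m+1,\pm 2n)}(x,y)$ completely explicit in terms of Chebyshev polynomials and then to run a sign-change analysis at fixed $x=x_0$. Starting from the matrices $\rho(a),\rho(b)$ in the statement, I would record the elementary traces $\tr\rho(ab^{-1})=y$, $\tr\rho(ab)=x^2-y$ and $\tr\rho(a^k)=2T_k(x/2)$, and then use the Cayley--Hamilton recursion $X^k=S_{k-1}(\tr X)\,X-S_{k-2}(\tr X)\,I$, where $S_k$ are the Chebyshev polynomials of the second kind, $S_k(2\cos\psi)=\sin((k+1)\psi)/\sin\psi$. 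Since $J(2m+1,\pm 2n)$ has two twist regions, the defining word splits into an odd block of $2m+1$ half-twists and an even block of $\pm2n$ half-twists; the even block is the one raised to the $n$-th power in the group presentation, so applying the recursion to it expresses $\Phi_{J(2m+1,\pm 2n)}(x,y)$ as an $\BR$-linear combination $\alpha(x,y)\,S_{n-1}(w)+\beta(x,y)\,S_{n-2}(w)$, where $w=w(x,y)$ is the trace attached to the even block and the coefficients $\alpha,\beta$ carry the Chebyshev-in-$x$ data $T_{2m+1}(x/2)$, $S_{2m}(x)$, $S_{2m-1}(x)$ of the odd block. Getting this closed form without sign errors, and tracking how the sign of the twist $+2n$ versus $-2n$ enters (it should amount to replacing $w$ by its companion $x^2-w$, equivalently shifting the operative Chebyshev index by one), is the bookkeeping backbone of the argument.

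Next I would explain the role of the hypothesis $|x_0|\ge 2\cos\frac{\pi}{4m+2}$. Writing $x_0=2\cos\theta$ with $\theta\in[0,\pi]$, this bound is exactly the statement $(2m+1)\theta\le \tfrac{\pi}{2}$, hence $\cos\big((2m+1)\theta\big)=T_{2m+1}(x_0/2)\ge 0$: it forces the degree-$(2m+1)$ Chebyshev value coming from the odd block to be sign-definite across the entire half-line, rather than merely near $|x_0|=2$. This is the one place the odd twisting number enters, and it is precisely what prevents the odd-block coefficient in $\Phi$ from changing sign and spoiling the count. I would verify the equivalence $(2m+1)\theta\le \pi/2\iff \theta\le \frac{\pi}{4m+2}$ directly, and reduce to $x_0>0$ using the symmetry $x_0\mapsto -x_0$, which preserves the set of real solutions $y$ (negating both meridians leaves $y=\tr\rho(ab^{-1})$ unchanged).

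With the odd-block coefficient pinned to a fixed sign, the real roots of $\Phi_{J(2m+1,\pm 2n)}(x_0,\,\cdot\,)$ are governed by the oscillation of $S_{n-1}(w)$. The cleanest route is to locate the real values $y_1,\dots,y_{n-1}$ (resp. one more in the negative case) at which $w(x_0,y)$ meets consecutive roots $2\cos\frac{j\pi}{n}$ of $S_{n-1}$; at such a point $\Phi(x_0,y_j)=\beta(x_0,y_j)\,S_{n-2}(w(x_0,y_j))$, and $S_{n-2}$ takes alternating signs at successive roots of $S_{n-1}$. Since the hypothesis makes $\beta$ sign-definite, $\Phi(x_0,\cdot)$ alternates sign along the $y_j$, and the intermediate value theorem yields a root in each gap. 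Counting the crossings gives $n-1$ solutions in case (i) and $n$ in case (ii), the extra solution in the negative-twist case arising from the index shift noted above. As a consistency check these equal $\tfrac12|\sigma(J(2m+1,2n))|=n-1$ and $\tfrac12|\sigma(J(2m+1,-2n))|=n$, so the theorem confirms Conjecture 1 sharply for these knots.

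The main obstacle, I expect, is the combination of the first and second steps: deriving the closed Chebyshev form for the full double-twist word correctly, and then proving that the odd-block coefficient keeps a single sign uniformly over all $|x_0|\ge 2\cos\frac{\pi}{4m+2}$, together with verifying that the requisite nodes of $S_{n-1}(w(x_0,\cdot))$ are actually attained at real $y$ with strictly alternating sign of $\Phi$. Once sign-definiteness and the nodal picture are secured, the count is a routine Chebyshev/intermediate-value computation; the delicate points are the exact phase/index bookkeeping and the endpoint behaviour, which decide whether one obtains $n-1$ or $n$ crossings and thereby separate cases (i) and (ii).
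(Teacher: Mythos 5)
Your overall plan --- write $\Phi$ in a closed Chebyshev form, pull the Chebyshev nodes back to real values of $y$, and count sign changes via the intermediate value theorem --- is the same family of argument as the paper's, but two of your concrete steps are wrong and a third is missing. First, the closed form: in the Schubert word for $J(2m+1,\pm 2n)$ the twist regions are powers of the two-letter words $ab^{-1}$, $a^{-1}b$, $ab$, never of a single meridian, so the coefficients of the outer Chebyshev expansion are polynomials in $y$, not in $x$. The paper gets $\Phi = S_n(t)-\mu S_{n-1}(t)$ with $t = x^2-y-(y-2)(y+2-x^2)S_m(y)S_{m-1}(y)$ and $\mu = 1 \mp (y+2-x^2)S_{\bullet}(y)\bigl(S_m(y)-S_{m-1}(y)\bigr)$ (with $S_{\bullet}=S_m$ in case (i) and $S_{m-1}$ in case (ii)); the meridian trace enters only through $x^2$, and no $T_{2m+1}(x/2)$, $S_{2m}(x)$ or $S_{2m-1}(x)$ occurs anywhere. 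Accordingly, the hypothesis $|x_0|\ge 2\cos\frac{\pi}{4m+2}$ is not used as ``$T_{2m+1}(x_0/2)\ge 0$''; what it actually provides (by the double-angle formula) is $x_0^2-2 \ge 2\cos\frac{\pi}{2m+1}$, which is the largest root of the $y$-polynomial $S_m(y)-S_{m-1}(y)$, so that $1-\mu$ keeps a single sign on the half-line $y>x_0^2-2$ where all the nodes are later shown to lie. Also, the two signs of twisting are not related by an ``index shift'' or a companion trace $x^2-w$: both cases have exactly the same $t$, and only $\mu$ changes, which flips the parity of the sign alternation.

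Second, your count is off by one, and as written the argument proves one fewer root than required: $n-1$ nodes (the roots $2\cos\frac{j\pi}{n}$ of $S_{n-1}$) with alternating signs have only $n-2$ gaps, hence give $n-2$ roots, and likewise $n$ nodes give $n-1$. The paper instead takes as nodes the $n$ roots $t_j = 2\cos\frac{(2j-1)\pi}{2n+1}$ of $S_n-S_{n-1}$, where $\Phi(y_j) = (1-\mu)S_n(t_j)$ and $(-1)^{j-1}S_n(t_j)>0$; sign-definiteness of $1-\mu$ (this is exactly where the hypothesis enters) then yields $n$ alternating points, i.e.\ $n-1$ roots, for case (i). In case (ii) the alternation starts with the opposite sign, and the paper adjoins the extra node $y_0=x_0^2-2$, at which $t=2$, $\mu=1$ and $\Phi(y_0)=S_n(2)-S_{n-1}(2)=1>0$, producing $n+1$ alternating points and hence $n$ roots. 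Finally, the step you flag but never carry out --- that each node value of $t$ is attained at a real $y$ and that the resulting $y_j$ are increasing --- is genuine work: the paper proves it via the identity $2-t=(y-x_0^2+2)\bigl(S_m(y)-S_{m-1}(y)\bigr)^2$, defines $y_j$ as the largest real solution of $2-t_j=(y-x_0^2+2)\bigl(S_m(y)-S_{m-1}(y)\bigr)^2$, and shows these largest solutions increase with $j$. Without the correct node set, the endpoint node, and this existence/ordering lemma, the proposal does not reach the bounds $n-1$ and $n$ in the statement.
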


As a consequence of Theorems \ref{main1} and \ref{main2} we have the following.

\begin{cor} 
Conjecture 1 holds true for double twist knots.
\end{cor}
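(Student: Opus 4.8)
The plan is to read off the corollary from Theorems \ref{main1} and \ref{main2} by specializing to parabolic representations. By definition a representation is parabolic when its meridian has trace $2$, so the parabolic representations of a two-bridge knot $K$ are exactly the points recorded by the roots $y$ of $\Phi_K(2,y)$, and what must be shown is that $\Phi_K(2,y)=0$ has at least $\frac{1}{2}|\sigma(K)|$ real solutions for each double twist knot $K$. Thus everything comes down to setting $x_0=2$ in the two theorems.

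First I would verify that $x_0=2$ satisfies the hypotheses in each case. In Theorem \ref{main1} the requirement $\sqrt{4-(mn)^{-1}}<|x_0|\le 2$ holds at $x_0=2$ for all positive integers $m,n$, since $(mn)^{-1}>0$ forces $\sqrt{4-(mn)^{-1}}<2$. In Theorem \ref{main2} the requirement $|x_0|\ge 2\cos\frac{\pi}{4m+2}$ holds at $x_0=2$ because $\cos\frac{\pi}{4m+2}\le 1$. Hence both theorems are applicable at the parabolic value $x_0=2$.

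Next I would match the guaranteed number of real roots against $\frac{1}{2}|\sigma|$, using the signatures $2$, $0$, $2-2n$, $2n$ recorded in the introduction for $J(2m,2n)$, $J(2m,-2n)$, $J(2m+1,2n)$, $J(2m+1,-2n)$; these give the targets $1$, $0$, $n-1$, $n$ respectively. Theorem \ref{main1}(i) yields exactly one real root for $J(2m,2n)$, meeting the target $1$; Theorem \ref{main1}(ii) yields no real roots for $J(2m,-2n)$, which trivially meets the target $0$; Theorem \ref{main2}(i) yields at least $n-1$ real roots for $J(2m+1,2n)$, meeting the target $n-1$; and Theorem \ref{main2}(ii) yields at least $n$ real roots for $J(2m+1,-2n)$, meeting the target $n$. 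So Conjecture 1 holds for each of these four families.

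Finally I would check that the four families exhaust all double twist knots. Since $J(k,l)$ is a knot precisely when $kl$ is even, at least one of $k,l$ is even; using the symmetry $J(k,l)=J(l,k)$ I may assume $l=\pm 2n$, and using the mirror relation $J(k,l)^{*}=J(-k,-l)$ I may assume $k>0$, so that $k=2m$ or $k=2m+1$ with $m,n$ positive. Because the knot group, and hence the parabolic character variety together with its real points, is unchanged under mirroring while the signature only reverses sign, both the number of real roots of $\Phi(2,y)$ and the quantity $\frac{1}{2}|\sigma|$ are mirror invariants; thus verifying Conjecture 1 on the four families verifies it for every double twist knot. The deduction is routine once the theorems are granted---the only step needing a little care is this last reduction, ensuring no double twist knot slips outside the four listed families.
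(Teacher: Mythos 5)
Your first two paragraphs are correct and coincide with the paper's own (implicit) justification: the paper offers no separate proof of the corollary, treating it exactly as you do, namely as the specialization $x_0=2$ of Theorems \ref{main1} and \ref{main2} (the hypotheses hold there) matched against the signatures $2$, $0$, $2-2n$, $2n$ recorded in the introduction, giving the targets $1$, $0$, $n-1$, $n$. Where you go beyond the paper is your final paragraph, the claim that the four families $J(2m,\pm 2n)$, $J(2m+1,\pm 2n)$ with $m,n\ge 1$ exhaust all double twist knots up to mirror image and the swap $J(k,l)=J(l,k)$. The paper is silent on this point, and your argument for it contains a genuine gap.

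The gap is the case $k=\pm 1$. After swapping so that $l=\pm 2n$ and mirroring so that $k>0$, you assert that $k=2m$ or $k=2m+1$ ``with $m,n$ positive''; but $k=1$ forces $m=0$, which the theorems exclude. Under the paper's stated definition (only $k,l\ne 0$ and $kl$ even are required), $J(1,\pm 2n)$ is a bona fide double twist knot. Its two-bridge normal form is $\fb(p,q)$ with $p=2n\pm 1$ and $q\equiv \pm 1 \pmod p$, i.e.\ a $(2,p)$ torus knot or its mirror; and comparing with the normal forms $\fb(4mn-1,\cdot)$, $\fb(4mn+1,\cdot)$, $\fb(4mn+2n-1,\cdot)$, $\fb(4mn+2n+1,\cdot)$ of the four families shows that for $p\ge 5$ neither $\fb(p,1)$ nor $\fb(p,p-1)$ occurs among them (only the trefoil $\fb(3,1)=J(2,2)$ does). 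So $T(2,5),T(2,7),\dots$ are double twist knots that the theorems do not reach, even up to mirroring and swapping. Conjecture 1 does hold for them: $|\sigma(T(2,p))|=p-1$, and all $(p-1)/2$ nonabelian parabolic characters of $T(2,p)$ are real, because writing $\pi_1=\la u,v\mid u^2=v^p\ra$ one has $\tr\rho(u)=0$, $\tr\rho(v)=2\cos\frac{(2j+1)\pi}{p}$, and the parabolicity condition is linear with real coefficients in the one remaining trace coordinate. But that is a supplementary argument which neither your proof nor the paper's theorems supply. To close your proof, either add such an argument for $J(\pm 1, l)$, or state explicitly that ``double twist knot'' is to be read as requiring $|k|,|l|\ge 2$ (equivalently, as the four listed families up to mirror image), which is evidently how the paper intends the term.
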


Results in Theorems \ref{main1} and \ref{main2} are related to the problem of determining the existence of real parabolic representations in the study of left-orderability of the fundamental groups of cyclic branched covers of two-bridge knots, see \cite{Hu, Tr}.  A group $G$ is called left-orderable if there exists a strict total ordering $<$ on its elements such that $g<h$ implies $fg<fh$ for all elements $f,g,h \in G$. Left-orderable groups have
 attracted much attention because of its connection
to L-spaces, a class of rational homology 3-spheres defined by Ozsvath and Szabo \cite{OS} using
Heegaard Floer homology, via a conjecture of Boyer, Gordon and Watson \cite{BGW}. This
conjecture states that an irreducible rational homology 3-sphere is an L-space if and
only if its fundamental group is not left-orderable.

In \cite{BGW, Hu} a sufficient condition for the fundamental
group of the $n^{th}$ cyclic branched cover of $S^3$ along a prime knot to be left-orderable is given, in terms of representations of the knot group. In particular, from the proof of \cite[Theorem 4.3]{Hu} it follows that if the knot group of a two-bridge knot $K$ has a real parabolic representation then the $n^{th}$ cyclic branched cover of $S^3$ along $K$ has left-orderable fundamental group for sufficiently large $n$. Consequently, if Conjecture 1 holds true then we would have the following: {\em if $K$ is a two-bridge knot with nonzero signature then the $n^{th}$ cyclic branched cover of $S^3$ along $K$ has left-orderable fundamental group for sufficiently large $n$.} 

The rest of this note is devoted to the proofs of Theorems \ref{main1} and \ref{main2}. In each proof we begin by computing the knot group and then compute the Riley polynomial $\Phi_K(x,y)$ of a double twist knot $K$. Finally we study real roots of the polynomial $\Phi_K(2,y)$.

\section{Proof of Theorem \ref{main1}}

\subsection{$J(2m,2n)$} The knot $J(2m,2n)$ is the two-bridge knot $\fb(4mn-1,4mn-2n-1)$.

\subsubsection{The knot group and the Riley polynomial}

\begin{lemma} \label{w1}
Suppose $1 \le j \le 4mn-2$. Write $j=2mq+r$, where $0 \le r \le 2m-1$. Then 
$\ve_j = (-1)^{q+r-1}.$
\end{lemma}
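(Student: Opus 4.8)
The plan is to compute the exponent $\lfloor j(4mn-2n-1)/(4mn-1)\rfloor$ modulo $2$ directly, exploiting the fact that the Schubert parameter $4mn-2n-1$ differs from $p=4mn-1$ only by the small quantity $2n$. (To avoid a clash with the letter $q$ used in the division $j=2mq+r$ of the statement, I keep the Schubert parameter written out.) First I would record the elementary identity $\lfloor -t\rfloor=-\lceil t\rceil$ together with the splitting $4mn-2n-1=p-2n$, which give
\[
\frac{j(4mn-2n-1)}{4mn-1}=j-\frac{2nj}{4mn-1},
\]
and hence $\lfloor j(4mn-2n-1)/(4mn-1)\rfloor=j-\lceil 2nj/(4mn-1)\rceil$. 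The problem is thereby reduced to understanding $\lceil 2nj/(4mn-1)\rceil$ modulo $2$.

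Next I would substitute $j=2mq+r$ and use $4mn-1=p$ together with $4mn=p+1$ to obtain $2nj=pq+(q+2nr)$, so that $\lceil 2nj/p\rceil=q+\lceil (q+2nr)/p\rceil$. The heart of the argument is then to pin down $\lceil (q+2nr)/p\rceil$. Here I would use the ranges $0\le q\le 2n-1$ and $0\le r\le 2m-1$ forced by $1\le j\le 4mn-2$ (the bound $q\le 2n-1$ coming from $\lfloor(4mn-2)/(2m)\rfloor=2n-1$ for $m\ge 1$). These ranges yield $0\le q+2nr\le p$, with the lower bound attained only at $(q,r)=(0,0)$, that is $j=0$, and the upper bound attained only at $(q,r)=(2n-1,2m-1)$, that is $j=p$. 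Since both extremes lie outside $1\le j\le 4mn-2$, we get the strict inequality $0<q+2nr<p$ and therefore $\lceil (q+2nr)/p\rceil=1$.

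Combining these steps gives $\lfloor j(4mn-2n-1)/(4mn-1)\rfloor=j-(q+1)=(2m-1)q+r-1$, and since $2m-1$ is odd this is congruent to $q+r-1$ modulo $2$, yielding $\ve_j=(-1)^{q+r-1}$ as claimed. I expect the main obstacle to be the boundary analysis in the second step: one must verify carefully that $q+2nr$ can equal $0$ or $p$ only at the excluded indices $j=0$ and $j=p$. For the upper bound this uses a short congruence: reducing $q+2nr=p$ modulo $2n$ forces $q\equiv -1\pmod{2n}$, hence $q=2n-1$, and then $2nr=4mn-2n$ gives $r=2m-1$; uniqueness of the decomposition $j=2mq+r$ then identifies this with $j=p$. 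Everything else is routine bookkeeping with the floor and ceiling functions.
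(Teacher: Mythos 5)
Your proposal is correct and follows essentially the same route as the paper: both split $\frac{(4mn-2n-1)j}{4mn-1}=j-\frac{2nj}{4mn-1}$, reduce via $j=2mq+r$ to the quantity $q+2nr$, and conclude from the strict bounds $0<q+2nr<4mn-1$ that the floor equals $(2m-1)q+r-1$, hence $\ve_j=(-1)^{q+r-1}$. The only difference is presentational: you phrase the reduction through ceiling functions and spell out the boundary analysis that the paper leaves as the one-line assertion $0<\frac{q+2nr}{4mn-1}<1$.
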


\begin{proof} We have
$$
\frac{(4mn-2n-1)j}{4mn-1} = 2mq+r-\frac{2n(2mq+r)}{4mn-1} =(2m-1)q+r-\frac{q+2nr}{4mn-1}.
$$
Since $ 0 < \frac{q+2nr}{4mn-1}<1$, we obtain $\lfloor \frac{(4mn-2n-1)j}{4mn-1} \rfloor= (2m-1)q+r-1$. Hence $\ve_j=(-1)^{q+r-1}$.
\end{proof}

Lemma \ref{w1} implies that the Schubert presentation of the knot group of $J(2m,2n)$ is $\pi_1(J(2m,2n)) = \la a,b \mid wa=bw \ra$ where $$w = a(b^{-1}a)^{m-1}\big[ (ba^{-1})^{m}(b^{-1}a)^{m} \big]^{n-1}(ba^{-1})^{m-1}b.$$

The Riley polynomial is computed from the matrix equation $\rho(wa)=\rho(bw)$. The proof of the following is similar that of \cite[Proposition 2.5]{MT} and hence is omitted.

\begin{proposition}
One has $\Phi_{J(2m,2n)}(x,y)= S_n(t) -  \mu S_{n-1}(t)$ where 
\begin{eqnarray*}
t &=& 2+(y-2)(y+2-x^2)S^2_{m-1}(y),\\
\mu &=& 1 + (y+2-x^2)S_{m-1}(y)(S_{m}(y)-S_{m-1}(y)).
\end{eqnarray*}
\end{proposition}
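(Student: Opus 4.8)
The plan is to follow the method of \cite[Proposition 2.5]{MT}: rewrite the relator word $w$ as a single power of one fixed $SL_2(\BC)$ matrix, compute all powers via the Cayley--Hamilton identity $X^{k}=S_{k-1}(\tr X)\,X-S_{k-2}(\tr X)\,I$ (valid for $X\in SL_2(\BC)$, with $S_k$ the Chebyshev polynomials), and finally extract the Riley polynomial as a linear functional of $\rho(w)$. First I would set $A=\rho(a)$, $B=\rho(b)$, $U=\rho(b^{-1}a)$ and $V=\rho(ba^{-1})$. Since $\tr\rho(ab^{-1})=y$ and trace is invariant under cyclic permutation and inversion in $SL_2(\BC)$, both $U$ and $V$ have trace $y$.

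Next I would simplify the word of Lemma \ref{w1}. At the group level one has $a(b^{-1}a)^{m-1}=b(b^{-1}a)^{m}$ and $(ba^{-1})^{m-1}b=(ba^{-1})^{m}a$ (because $b\,b^{-1}a=a$ and $ba^{-1}\,a=b$), so the prefix and suffix absorb into the central block; the middle then telescopes via $U^{m}(V^{m}U^{m})^{n-1}V^{m}=(U^{m}V^{m})^{n}$. Writing $D=U^{m}V^{m}$ and $t=\tr D$, this yields
\be
\rho(w)=B\,D^{n}\,A=S_{n-1}(t)\,BDA-S_{n-2}(t)\,BA ,
\ee
where the last equality is Cayley--Hamilton applied to $D^{n}$. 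The factors $U^{m}$ and $V^{m}$ are likewise expanded as $S_{m-1}(y)U-S_{m-2}(y)I$ and $S_{m-1}(y)V-S_{m-2}(y)I$.

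To identify $t$, I would expand $t=\tr(U^{m}V^{m})=S_{m-1}^{2}(y)\tr(UV)-2y\,S_{m-1}(y)S_{m-2}(y)+2S_{m-2}^{2}(y)$. A direct $2\times2$ computation gives $\tr(UV)=2+(y-2)(y+2-x^{2})$, and the Chebyshev identity $S_{m-1}^{2}(y)-y\,S_{m-1}(y)S_{m-2}(y)+S_{m-2}^{2}(y)=1$ collapses the remaining terms to the stated value $t=2+(y-2)(y+2-x^{2})S_{m-1}^{2}(y)$. For the extraction, I would invoke Riley's reduction \cite{Ri} (recalled in the introduction): the matrix equation $\rho(wa)=\rho(bw)$ is equivalent to the vanishing of the single scalar $\Phi=\ell(\rho(w))$, where $\ell(W)=W_{11}-(s-s^{-1})W_{12}$, after symmetrization in $s\leftrightarrow s^{-1}$. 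Applying $\ell$ to the displayed expression gives $\Phi=S_{n-1}(t)\,\ell(BDA)-S_{n-2}(t)\,\ell(BA)$. One checks directly that $\ell(BA)=1$, and the computation of the boundary term gives $\ell(BDA)=t-\mu$ with $\mu$ as stated; the recursion $S_{n}(t)=t\,S_{n-1}(t)-S_{n-2}(t)$ then turns $\Phi=S_{n-1}(t)(t-\mu)-S_{n-2}(t)$ into $S_{n}(t)-\mu S_{n-1}(t)$.

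The hard part will be the boundary computation $\ell(BDA)=t-\mu$. This requires fully expanding $D=U^{m}V^{m}$ into explicit entries, each an expression in $S_{m-1}(y),S_{m-2}(y)$ and the entries of $U,V$, then forming the sandwich $B(\,\cdot\,)A$, applying $\ell$, and recognizing $t-\mu$ only after repeated use of $S_{m}(y)=y\,S_{m-1}(y)-S_{m-2}(y)$ (for instance $t-\mu$ simplifies to $1-(y+2-x^{2})S_{m-1}(y)(S_{m-1}(y)-S_{m-2}(y))$, the natural ``dual'' of $\mu$). The algebra is elementary but the bookkeeping is delicate, and one must also confirm that $\ell(\rho(w))$, after the $s\leftrightarrow s^{-1}$ symmetrization, is exactly the normalized Riley polynomial $\Phi_{J(2m,2n)}(x,y)$.
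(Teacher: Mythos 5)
Your proposal is correct and follows essentially the same route as the paper's (omitted) proof, namely the Cayley--Hamilton/Chebyshev method of \cite[Proposition 2.5]{MT}: reduce $w$ to $b\,(U^mV^m)^n\,a$, expand matrix powers via the $S_k$'s, and extract the Riley polynomial through Riley's single-equation reduction, with all of your intermediate identities (the word simplification, $\tr(UV)=2+(y-2)(y+2-x^2)$, $\ell(BA)=1$, and $t-\mu=1-(y+2-x^2)S_{m-1}(y)(S_{m-1}(y)-S_{m-2}(y))$) checking out. The ``hard part'' you deferred is in fact much easier than you anticipate: writing $U^m=S_{m-1}(y)U-S_{m-2}(y)I$ and $V^m=S_{m-1}(y)V-S_{m-2}(y)I$, linearity gives $\ell(BDA)=S^2_{m-1}(y)\,\ell(BUVA)-S_{m-1}(y)S_{m-2}(y)\bigl[\ell(BUA)+\ell(BVA)\bigr]+S^2_{m-2}(y)\,\ell(BA)$, and the group elements collapse ($BUA=A^2$, $BVA=B^2$, $BUVA=AB$), so that $\ell(AB)=x^2-1-y$, $\ell(A^2)+\ell(B^2)=x^2-2$, $\ell(BA)=1$, and the identity $S^2_{m-1}(y)+S^2_{m-2}(y)-yS_{m-1}(y)S_{m-2}(y)=1$ yields $\ell(BDA)=t-\mu$ with no entrywise bookkeeping at all.
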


Here $S_k(z)$'s are the Chebychev polynomials of the second kind defined by $S_0(z)=1$, $S_1(z)=z$ and $S_{k}(z)=zS_{k-1}(z)-S_{k-2}(z)$ for all integers $k$. Note that $S_k(2)=k+1$ and $S_k(-2)=(-1)^k (k+1)$. Moreover if $z=\lambda+\lambda^{-1}$, where $\lambda \not= \pm 1$, then $S_k(z)=\frac{\lambda^{k+1}-\lambda^{-(k+1)}}{\lambda-\lambda^{-1}}$.  

In the proofs of Theorems \ref{main1} and \ref{main2} below, we will need the following properties of $S_k(z)$'s whose proofs can be found, for example, in \cite{Tr} and references therein.

\smallskip

(1) $S^2_k(z) + S^2_{k-1}(z) - z S_k(z)S_{k-1}(z)=1$.

\smallskip

(2) If $z$ is a real number between $-2$ and $2$, then $|S_{k-1}(z)| \le |k|$.

\smallskip

(3) $S_k(z) = \prod_{j=1}^{k} (z-2\cos \frac{j\pi}{k+1})$ and $S_k(z) - S_{k-1}(z) =  \prod_{j=1}^{k} (z-2\cos \frac{(2j-1)\pi}{2k+1})$ for $k \ge 1$.

\subsubsection{Proof of Theorem \ref{main1}(i)} Suppose $x_0 \in \BR$ satisfies $\sqrt{4-(mn)^{-1}}< |x_0| \le 2$. We want to prove that the equation $\Phi_{J(2m,2n)}(x_0,y)=0$ has exactly one real solution $y$. 

The proof consists of two steps.\\
\textit{Step 1: The equation $\Phi_{J(2m,2n)}(x_0,y)=0$ has no real solutions $y \le 2$.}

The proof of Step 1 is similar to that of \cite[Proposition 2.7]{Tr}. Suppose $y \le 2$ and $\Phi_{J(2m,2n)}(x_0,y)=0$. Then $S_n(t) =  \mu S_{n-1}(t)$. Since $S^2_{n}(t)+S^2_{n-1}(t)-tS_{n}(t)S_{n-1}(t)=1$ we have $(\mu^2+1-\mu t)S^2_{n-1}(t)=1$. 

By a direct calculation we have $\mu^2+1-\mu t = (y+2-x_0^2)S^2_{m-1}(y)(t+2-x_0^2)$. Hence
\begin{equation} \label{yt1}
(y+2-x_0^2)S^2_{m-1}(y)(t+2-x_0^2)S^2_{n-1}(t)=1.
\end{equation}

Since $(y-2)(y+2-x_0^2)S^2_{m-1}(y)=t-2$, equation \eqref{yt1} implies that $$(t-2)(t+2-x_0^2)S^2_{n-1}(t)=y-2 \le 0.$$ 
Hence $x_0^2-2<t \le 2$. Then $(y-2)(y+2-x_0^2)=(t-2)/S^2_{m-1}(y) \le 0$, which implies that $x_0^2-2< y \le 2$. Since $|y| \le 2$, we have $|S_{m-1}(y)| \le m$. Similarly $|S_{n-1}(t)| \le n$. Hence
$$(y+2-x_0^2)S^2_{m-1}(y)(t+2-x_0^2)S^2_{n-1}(t) \le (4-x_0^2)^2m^2n^2<1$$
which contradicts equation \eqref{yt1}.\\
\textit{Step 2: The equation $\Phi_{J(2m,2n)}(x_0,y)=0$ has exactly one real solution $y > 2$.}

Consider the following real functions on $(2,\infty)$:
\begin{eqnarray*}
f(y) &=& \Phi_{J(2m,2n)}(x_0,y) =  S_n(t) - \mu S_{n-1}(t),\\
g(y) &=& (y+2-x_0^2)S^2_{m-1}(y)(t+2-x_0^2)S^2_{n-1}(t)-1.
\end{eqnarray*}
Note that $g(y)$ comes from the equation \eqref{yt1}. In Step 1 we showed that $f(y)=0$ implies $g(y)=0$.
Since $g(y)$ is an increasing function on $(2, \infty)$, it has at most one root on $(2, \infty)$. Consequently, $f(y)$ has at most one root on $(2, \infty)$. Since $$\lim_{y \to 2} f(y) = (n+1) - (1+(4-x^2_0)m) n  = 1 - (4-x^2_0)mn>0$$ and $\lim_{y \to \infty} f(y)=-\infty$, we conclude that $f(y)$ has exactly one root on $(2, \infty)$.

Steps 1 and 2 complete the proof of Theorem \ref{main1}(i). 

\subsection{$J(2m,-2n)$} The knot $J(2m,-2n)$ is the two-bridge knot $\fb(4mn+1,4mn-2n+1)$. 

\subsubsection{The knot group and the Riley polynomial}

\begin{lemma} \label{w2}
Suppose $1 \le j \le 4mn$. Write $j=2mq+r$, where $0 \le r \le 2m-1$. Then 
$\ve_j = (-1)^{q+r-1}$ if $r \ge 1$, and $\ve_j = (-1)^q$ if $r=0$.
\end{lemma}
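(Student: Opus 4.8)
The plan is to mirror the computation in the proof of Lemma \ref{w1}, now for the two-bridge knot $J(2m,-2n)=\fb(4mn+1,4mn-2n+1)$, so that by definition $\ve_j=(-1)^{\lfloor (4mn-2n+1)j/(4mn+1)\rfloor}$. Writing $j=2mq+r$ and using $4mn-2n+1=(4mn+1)-2n$ together with $4mnq=(4mn+1)q-q$, I would first expand
$$\frac{(4mn-2n+1)j}{4mn+1}=j-\frac{2n(2mq+r)}{4mn+1}=(2m-1)q+r+\frac{q-2nr}{4mn+1}.$$
Thus the integer part of $(4mn-2n+1)j/(4mn+1)$ is governed entirely by the sign and size of the fractional term $\frac{q-2nr}{4mn+1}$, and this is precisely where the behavior splits into the two cases of the statement.

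The key step is therefore to bound $q-2nr$ under the hypotheses $1\le j\le 4mn$ and $0\le r\le 2m-1$, treating $r=0$ and $r\ge 1$ separately. When $r=0$ I have $j=2mq$ with $1\le j\le 4mn$, so $1\le q\le 2n$ and hence $0<\frac{q}{4mn+1}\le\frac{2n}{4mn+1}<1$; the floor equals $(2m-1)q$, giving $\ve_j=(-1)^{(2m-1)q}=(-1)^q$ since $2mq$ is even. When $r\ge 1$, the constraint $2mq+r\le 4mn$ forces $q\le 2n-1$, so $q<2n\le 2nr$ and $q-2nr<0$; on the other side $2nr-q\le 2n(2m-1)=4mn-2n<4mn+1$, so $-1<\frac{q-2nr}{4mn+1}<0$. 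The floor then equals $(2m-1)q+r-1$, yielding $\ve_j=(-1)^{(2m-1)q+r-1}=(-1)^{q+r-1}$, again because $2mq$ is even.

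The main obstacle is not any single estimate but keeping the case division clean: the reason this lemma carries two formulas, whereas Lemma \ref{w1} had only one, is that replacing $4mn-1$ by $4mn+1$ in the denominator flips the sign of the residual linear term (from $-\frac{q+2nr}{4mn-1}$ to $+\frac{q-2nr}{4mn+1}$), so the fractional part is no longer uniformly negative and instead lands in the nonnegative range exactly when $r=0$. I would double-check the boundary value $j=4mn$, which gives $q=2n$ and $r=0$, to confirm it falls under the $r=0$ branch and respects $q\le 2n$, and I would verify strict positivity of $\frac{q}{4mn+1}$ there using $q\ge 1$. Once these bounds are secured, the two floor computations together with the parity simplification $(-1)^{(2m-1)q}=(-1)^{q}$ complete the proof.
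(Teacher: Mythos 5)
Your proof is correct and follows essentially the same route as the paper, which proves Lemma \ref{w1} by this exact computation and leaves Lemma \ref{w2} as the analogous case: expand $\frac{(4mn-2n+1)j}{4mn+1}=(2m-1)q+r+\frac{q-2nr}{4mn+1}$, bound the residual fraction in each case, and reduce the exponent modulo $2$ using that $2m-1$ is odd. Your case split at $r=0$ (where the residual term becomes positive rather than lying in $(-1,0)$) is precisely what generates the two formulas, and the bounds $1\le q\le 2n$ for $r=0$ and $q\le 2n-1$, $2nr-q\le 4mn-2n$ for $r\ge 1$ are all verified correctly.
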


Lemma \ref{w2} implies that the Schubert presentation of the knot group of $J(2m,-2n)$ is $\pi_1(J(2m,-2n)) = \la a,b \mid wa=bw \ra$ where
$$w =  \big[ (ab^{-1})^{m}(a^{-1}b)^{m} \big]^{n}.$$

\begin{proposition}
One has $\Phi_{J(2m,-2n)}(x,y)= S_n(t)-\mu S_{n-1}(t)$ where 
\begin{eqnarray*}
t &=& 2+(y-2)(y+2-x^2)S^2_{m-1}(y),\\
\mu &=& 1 - (y+2-x^2)S_{m-1}(y)(S_{m-1}(y)-S_{m-2}(y)).
\end{eqnarray*}
\end{proposition}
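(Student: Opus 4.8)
The plan is to imitate the matrix computation behind \cite[Proposition 2.5]{MT} used for $J(2m,2n)$, which is in fact easier here because the relator word is the single block $V = (ab^{-1})^m(a^{-1}b)^m$ raised to the power $n$. First I would record the two elementary pieces: a direct multiplication of the given matrices $\rho(a),\rho(b)$ and their inverses gives
$$\rho(ab^{-1}) = \begin{bmatrix} y-1 & s \\ (y-2)s^{-1} & 1 \end{bmatrix}, \qquad \rho(a^{-1}b) = \begin{bmatrix} y-1 & -s^{-1} \\ (2-y)s & 1 \end{bmatrix}.$$
Both lie in $SL_2(\BC)$ and have trace $y$; this common trace is the structural fact that makes the whole computation go through.

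Next, because both blocks are trace-$y$ elements of $SL_2(\BC)$, the Cayley--Hamilton identity yields the Chebychev closed forms $\rho((ab^{-1})^m)=S_{m-1}(y)\rho(ab^{-1})-S_{m-2}(y)I$ and $\rho((a^{-1}b)^m)=S_{m-1}(y)\rho(a^{-1}b)-S_{m-2}(y)I$. I would multiply these to obtain $\rho(V)$, reducing the product to the scalar data $\tr(\rho(ab^{-1})\rho(a^{-1}b))$ together with the block traces. A short calculation gives $\tr(\rho(ab^{-1})\rho(a^{-1}b))-2=(y-2)(y+2-x^2)$ once the stray $s^{\pm2}$ terms are collected via $s^2+s^{-2}=x^2-2$; substituting this into $\tr\rho(V)=S_{m-1}^2(y)\tr(\rho(ab^{-1})\rho(a^{-1}b))-2yS_{m-1}(y)S_{m-2}(y)+2S_{m-2}^2(y)$ and using property (1) to replace $2S_{m-2}^2(y)$ produces exactly $t=2+(y-2)(y+2-x^2)S_{m-1}^2(y)$. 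Since $w=V^n$ and $\tr\rho(V)=t$, one further application of Cayley--Hamilton gives $\rho(w)=S_{n-1}(t)\rho(V)-S_{n-2}(t)I$.

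It then remains to read off the Riley polynomial. Comparing the $(1,2)$-entries of $\rho(w)\rho(a)$ and $\rho(b)\rho(w)$ shows the matrix equation $\rho(wa)=\rho(bw)$ is controlled by $w_{11}-(s-s^{-1})w_{12}=0$, so I take $\widetilde\Phi_{J(2m,-2n)}(s,y)=w_{11}-(s-s^{-1})w_{12}$. Writing $w_{11}=S_{n-1}(t)V_{11}-S_{n-2}(t)$ and $w_{12}=S_{n-1}(t)V_{12}$ and substituting the recursion $S_{n-2}(t)=tS_{n-1}(t)-S_n(t)$ rearranges this into $S_n(t)-\mu S_{n-1}(t)$ with $\mu=t-V_{11}+(s-s^{-1})V_{12}$. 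Finally I would compute $V_{11}$ and $V_{12}$ from the expansion of $\rho(V)$, collect the $s^{\pm2}$ terms into $x^2$, and invoke property (1) a second time to trade $S_{m-2}^2(y)$ for $1-S_{m-1}^2(y)+yS_{m-1}(y)S_{m-2}(y)$; this collapses $\mu$ to $1-(y+2-x^2)S_{m-1}(y)(S_{m-1}(y)-S_{m-2}(y))$, as claimed.

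The steps are mechanical, so the main obstacle is bookkeeping rather than any new idea. In particular I must track the off-diagonal $s$ and $s^{-1}$ entries carefully and check that every odd power of $s$ recombines into a symmetric function of $x=s+s^{-1}$, which is what guarantees $\widetilde\Phi(s,y)=\widetilde\Phi(s^{-1},y)$ and lets me pass to the variable $x$; and both $t$ and $\mu$ only emerge in the stated compact form after property (1) is used to eliminate $S_{m-2}^2(y)$. No genuinely new difficulty arises beyond the $J(2m,2n)$ case.
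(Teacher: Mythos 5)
Your proposal is correct and takes essentially the same route as the paper, which omits this proof entirely and points to the analogous computation in \cite[Proposition 2.5]{MT}: namely the Cayley--Hamilton/Chebychev expansion $\rho(V^n)=S_{n-1}(t)\rho(V)-S_{n-2}(t)I$ for the block $V=(ab^{-1})^m(a^{-1}b)^m$, combined with Riley's reduction of $\rho(wa)=\rho(bw)$ to the single equation $w_{11}-(s-s^{-1})w_{12}=0$. Your intermediate identities check out ($\tr(\rho(ab^{-1})\rho(a^{-1}b))-2=(y-2)(y+2-x^2)$, the resulting $t$, and the collapse of $\mu=t-V_{11}+(s-s^{-1})V_{12}$ to $1-(y+2-x^2)S_{m-1}(y)(S_{m-1}(y)-S_{m-2}(y))$ via property (1)), so the only point worth making explicit is that the equivalence of the full matrix equation with the $(1,2)$-entry equation rests on Riley's palindromic-symmetry lemma, which the paper already invokes.
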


\subsubsection{Proof of Theorem \ref{main1}(ii)} Suppose $x_0 \in \BR$ satisfies $\sqrt{4-(mn)^{-1}}< |x_0| \le 2$. We want to prove that the equation $\Phi_{J(2m,-2n)}(x_0,y)=0$ has no real solutions $y$. 

The proof consists of two steps.\\
\textit{Step 1: The equation $\Phi_{J(2m,-2n)}(x_0,y)=0$ has no real solutions $y \le 2$.}

The proof of Step 1 is similar to that of $J(2m,2n)$.\\
\textit{Step 2: The equation $\Phi_{J(2m,-2n)}(x_0,y)=0$ has no real solutions $y > 2$.}

Suppose $y>2$. Then $t>2$ and $\mu < 1$. Hence $$\Phi_{J(2m,-2n)}(x_0,y) = S_n(t) - \mu S_{n-1}(t) > S_{n}(t) - S_{n-1}(t) >0.$$
Steps 1 and 2 complete the proof of Theorem \ref{main1}(ii).

\section{Proof of Theorem \ref{main2}}

\subsection{$J(2m+1,2n)$} The knot $J(2m+1,2n)$ is the two-bridge knot $\fb(4mn+2n-1,4mn-1)$.

\subsubsection{The knot group and the Riley polynomial}

\begin{lemma} \label{w3}
Suppose $1 \le j \le 2n(2m+1)-2$. Write $j=(2m+1)q+r$, where $0 \le r \le 2m$. Then $\ve_j=(-1)^{r-1}$.
\end{lemma}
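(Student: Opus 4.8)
The plan is to follow the template of the proof of Lemma~\ref{w1}. Recall that for $\fb(p,q)$ the Schubert exponents are $\ve_j=(-1)^{\lfloor jq/p\rfloor}$; to avoid a clash with the quotient $q$ appearing in the statement, write $p=4mn+2n-1$ and $Q=4mn-1$ for the two parameters of $\fb(4mn+2n-1,4mn-1)$, so that $\ve_j=(-1)^{\lfloor jQ/p\rfloor}$. The two identities that make the computation clean are $Q=p-2n$ and $2n(2m+1)=p+1$. First I would substitute $j=(2m+1)q+r$ into $jQ/p$ and rewrite everything in terms of $q$ and $r$, aiming to express the quantity as an integer minus a proper fraction, exactly as in the proof of Lemma~\ref{w1}.

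Carrying out the algebra, the identity $Q=p-2n$ gives $\frac{Qj}{p}=j-\frac{2nj}{p}$, and then $2nj=2n(2m+1)q+2nr=(p+1)q+2nr$ yields $\frac{2nj}{p}=q+\frac{q+2nr}{p}$. Substituting $j=(2m+1)q+r$ and simplifying, I expect to reach
$$\frac{Qj}{p}=2mq+r-\frac{q+2nr}{p}.$$
Once this identity is in hand, the floor is determined entirely by the size of the fractional correction $\frac{q+2nr}{p}$.

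The crux is therefore to show $0<q+2nr<p$ for every $j$ in the range $1\le j\le 2n(2m+1)-2$. Granting this, $\frac{q+2nr}{p}\in(0,1)$, so $\lfloor Qj/p\rfloor=2mq+r-1$ and hence $\ve_j=(-1)^{2mq+r-1}=(-1)^{r-1}$, since $2mq$ is even; this conclusion is uniform and covers the case $r=0$ automatically. The lower bound is immediate: since $j\ge 1$ we cannot have $q=r=0$, so $q+2nr\ge 1$. The main obstacle is the upper bound, which I would establish by a short case split driven by the constraint $(2m+1)q+r\le 2n(2m+1)-2$: when $r\le 2m-1$ one has $q\le 2n-1$, giving $q+2nr\le 4mn-1<p$, while when $r=2m$ the constraint forces $q\le 2n-2$, giving $q+2nr\le 4mn+2n-2<p$. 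This bookkeeping is the only place the precise endpoints of the range for $j$ enter, and making both cases land strictly below $p$ is the delicate point.
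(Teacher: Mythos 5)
Your proposal is correct and follows exactly the route the paper intends: the paper omits the proof of Lemma~\ref{w3} precisely because it is the same computation as in Lemma~\ref{w1}, namely writing $\frac{Qj}{p}$ as an integer minus a proper fraction (your identity $\frac{Qj}{p}=2mq+r-\frac{q+2nr}{p}$ is the analogue of the displayed identity there) and reading off the floor. Your explicit case split verifying $0<q+2nr<p$ is in fact more careful than the paper, which asserts the corresponding bound without justification.
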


Lemma \ref{w3} implies that the Schubert presentation of the knot group of $J(2m+1,2n)$ is $\pi_1(J(2m+1,2n)) = \la a,b \mid wa=bw \ra$ where $$w=(ab^{-1})^m \big[ (a^{-1}b)^ma^{-1}b^{-1}(ab^{-1})^m \big]^{n-1}(a^{-1}b)^m.$$ 

\begin{proposition}
One has $\Phi_{J(2m+1,2n)}(x,y)= S_n(t) -  \mu S_{n-1}(t)$ where 
\begin{eqnarray*}
t &=& x^2-y-(y-2)(y+2-x^2)S_m(y)S_{m-1}(y),\\
\mu &=& 1 - (y-x^2+2)S_m(y) ( S_m(y)-S_{m-1}(y) ).
\end{eqnarray*}
\end{proposition}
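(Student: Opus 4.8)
The plan is to run Riley's reduction directly, after telescoping the word $w$ into a single matrix power so that the $n$-dependence is packaged into one application of Cayley--Hamilton.

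First I would record the building blocks. Set $U=\rho(ab^{-1})$, $V=\rho(a^{-1}b)$ and $E=\rho(a^{-1}b^{-1})$; multiplying out the given $\rho(a),\rho(b)$ shows $U,V,E\in SL_2(\BC)$ with $\tr U=\tr V=y$ and $\tr E=s^2+s^{-2}-y+2=x^2-y$. Next I would identify the scalar equation produced by Riley's theorem: writing $\rho(w)=\left[\begin{smallmatrix} w_{11}&w_{12}\\ w_{21}&w_{22}\end{smallmatrix}\right]$ and matching the $(1,2)$-entries of $\rho(wa)=\rho(bw)$ gives, for $y\neq2$ and $s\neq\pm1$, the single equation $\Psi(\rho(w))=0$, where $\Psi$ is the linear functional $\Psi(M)=M_{11}-(s-s^{-1})M_{12}$; this $\Psi(\rho(w))$ is $\widetilde{\Phi}_{J(2m+1,2n)}(s,y)$.

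The key step is to telescope $\rho(w)$. From the Schubert word we have $\rho(w)=U^{m}\big(V^{m}EU^{m}\big)^{n-1}V^{m}$, and the elementary identity $P(QEP)^{n-1}=(PQE)^{n-1}P$ (valid for invertible matrices, with $P=U^m$, $Q=V^m$) yields
$$
\rho(w)=(U^{m}V^{m}E)^{n}E^{-1}=G^{n}E^{-1},\qquad G:=U^{m}V^{m}E .
$$
Since $\det G=1$, Cayley--Hamilton gives $G^{n}=S_{n-1}(t)\,G-S_{n-2}(t)\,I$ with $t:=\tr G$, and because $GE^{-1}=U^{m}V^{m}$ we get $\rho(w)=S_{n-1}(t)\,U^{m}V^{m}-S_{n-2}(t)\,E^{-1}$. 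Applying the linear functional $\Psi$ and using that a direct computation of $E^{-1}=\rho(ba)$ gives $\Psi(E^{-1})=s^2-(s-s^{-1})s=1$, I obtain $\Phi_{J(2m+1,2n)}=S_{n-1}(t)\,\Psi(U^mV^m)-S_{n-2}(t)$. Rewriting $-S_{n-2}(t)=S_n(t)-t\,S_{n-1}(t)$ puts this in the claimed shape $S_n(t)-\mu S_{n-1}(t)$ with $\mu=t-\Psi(U^{m}V^{m})$, so the problem collapses to computing the two scalars $t=\tr(U^{m}V^{m}E)$ and $\Psi(U^{m}V^{m})$.

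Both are handled by expanding $U^{m}=S_{m-1}(y)U-S_{m-2}(y)I$ and $V^{m}=S_{m-1}(y)V-S_{m-2}(y)I$, so that $U^{m}V^{m}=S_{m-1}^{2}(y)\,UV-S_{m-1}(y)S_{m-2}(y)\,(U+V)+S_{m-2}^{2}(y)\,I$, computing the entries of $UV$, $U+V$ and $UV\!\cdot\!E$ explicitly (these are low-degree polynomials in $s^{\pm1},y$), and taking the relevant trace/combination. Everything that survives is invariant under $s\mapsto s^{-1}$, so it descends to a polynomial in $x=s+s^{-1}$ as required. \emph{The main obstacle is the final reorganization}: the quadratic Chebyshev expressions in $S_{m-1}(y),S_{m-2}(y)$ must be collapsed into the stated closed forms, namely $t=x^{2}-y-(y-2)(y+2-x^{2})S_{m}(y)S_{m-1}(y)$ and $\mu=1-(y-x^{2}+2)S_{m}(y)(S_{m}(y)-S_{m-1}(y))$, using the recursion $S_{m}(y)=yS_{m-1}(y)-S_{m-2}(y)$ to produce the shift from index $m-1,m-2$ to $m,m-1$ together with property (1), $S_{m}^{2}+S_{m-1}^{2}-yS_{m}S_{m-1}=1$. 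This bookkeeping is routine but delicate, and is exactly parallel to the computation in \cite[Proposition 2.5]{MT}; as a consistency check one can verify the identity $\mu=t-\Psi(U^mV^m)$ directly in the base case $m=1$, where $\Psi(UV)=(y-1)^2+y-(y-1)(x^2-2)$.
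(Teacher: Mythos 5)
Your proposal is correct: the telescoping identity $\rho(w)=(U^{m}V^{m}E)^{n}E^{-1}$, Riley's single-entry reduction with $\Psi(E^{-1})=1$, the resulting formula $\mu=t-\Psi(U^{m}V^{m})$, and the final Chebyshev bookkeeping (which does collapse, via $S_m(y)=yS_{m-1}(y)-S_{m-2}(y)$ and property (1), to the stated closed forms for $t$ and $\mu$) are all sound. This is essentially the same approach as the paper's, which omits the proof entirely by deferring to \cite[Proposition 2.5]{MT} — the same Riley-reduction-plus-Cayley--Hamilton method you carry out.
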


\subsubsection{Proof of Theorem \ref{main2}(i)} Suppose $x_0 \in \BR$ satisfies $|x_0| \ge 2\cos \frac{\pi}{4m+2}$. We want to prove that the equation $\Phi_{J(2m+1,2n)}(x_0,y)=0$ has at least $n-1$ real solutions $y$. To achieve this, we will show that there exist real numbers $y_1 < y_2 < \cdots < y_n$ such that $f(y_j)f(y_{j+1})<0$ for $1 \le j \le n-1$, where $f(y)=\Phi_{J(2m+1,2n)}(x_0,y)$.

We have $S_n(t)-S_{n-1}(t)=\prod_{j=1}^n (t-2\cos \frac{(2j-1)\pi}{2n+1})$. Let $t_j=2\cos \frac{(2j-1)\pi}{2n+1}$. Since 
$$S_n(t_j)=\frac{\sin \frac{(2j-1)(n+1)\pi}{2n+1}} {\sin \frac{(2j-1)\pi}{2n+1}}=\frac{\sin \left( j\pi -\frac{\pi}{2}  + \frac{(2j-1)\pi}{2(2n+1)} \right)} {\sin \frac{(2j-1)\pi}{2n+1}}=(-1)^{j-1}\frac{\cos \frac{(2j-1)\pi}{2(2n+1)} } {\sin \frac{(2j-1)\pi}{2n+1}}$$
we have $(-1)^{j-1}S_n(t_j)>0$.

Note that $2-t=(y-x_0^2+2)(S_m(y)-S_{m-1}(y))^2.$ Choose the largest real solution $y_j$ of $2-t_j=(y-x_0^2+2)(S_m(y)-S_{m-1}(y))^2.$

\begin{lemma}
One has $x^2-2<y_1<y_2< \cdots < y_n$.
\end{lemma}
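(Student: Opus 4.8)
The plan is to turn the defining relation for each $y_j$ into one strictly monotone equation, and then read off the ordering of the $y_j$ from the (reverse) ordering of the $t_j$. Writing $h(y) = (y-x_0^2+2)\big(S_m(y)-S_{m-1}(y)\big)^2$, the identity $2-t=(y-x_0^2+2)(S_m(y)-S_{m-1}(y))^2$ recorded just above says precisely that $y_j$ is the largest real root of $h(y)=2-t_j$. Since $t_j=2\cos\frac{(2j-1)\pi}{2n+1}$ and cosine is strictly decreasing on $(0,\pi)$, the $t_j$ satisfy $2>t_1>t_2>\cdots>t_n>-2$, so $0<2-t_1<2-t_2<\cdots<2-t_n<4$. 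Thus it will suffice to prove that $h$ is a strictly increasing bijection of $(x_0^2-2,\infty)$ onto $(0,\infty)$ with no root of $h(y)=2-t_j$ lying in $(-\infty,x_0^2-2]$: then each $y_j$ is the unique root of $h(y)=2-t_j$, it lies in $(x_0^2-2,\infty)$, and applying the increasing inverse $h^{-1}$ to the chain $0<2-t_1<\cdots<2-t_n$ yields $x_0^2-2<y_1<\cdots<y_n$.

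First I would use the hypothesis to locate $x_0^2-2$ relative to the zeros of $S_m-S_{m-1}$. From $2\cos^2\theta=1+\cos2\theta$, the bound $|x_0|\ge 2\cos\frac{\pi}{4m+2}$ is equivalent to $x_0^2\ge 2+2\cos\frac{\pi}{2m+1}$, that is, $x_0^2-2\ge 2\cos\frac{\pi}{2m+1}$. By property (3), $S_m(y)-S_{m-1}(y)=\prod_{j=1}^m\big(y-2\cos\frac{(2j-1)\pi}{2m+1}\big)$, whose roots are all at most $2\cos\frac{\pi}{2m+1}\le x_0^2-2$. Hence for every $y>x_0^2-2$ each linear factor is positive, so $S_m(y)-S_{m-1}(y)>0$, and its derivative, being a sum of products of these positive factors, is also positive.

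With these signs established, the monotonicity is routine. Setting $P(y)=S_m(y)-S_{m-1}(y)$ and $L(y)=y-x_0^2+2$, one computes $h'=P\,(P+2LP')$; on $(x_0^2-2,\infty)$ each of $P$, $P'$, $L$ is positive, so $h'>0$. Since $h(x_0^2-2)=0$ and $h(y)\to\infty$ as $y\to\infty$, $h$ is an increasing bijection from $(x_0^2-2,\infty)$ onto $(0,\infty)$. For $y\le x_0^2-2$ we have $L(y)\le 0$ and hence $h(y)\le 0<2-t_j$, so no root sits there; thus each $y_j$ is the unique real root of $h(y)=2-t_j$, automatically the largest, and the ordering claimed in the lemma follows.

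The crux, and the only real obstacle, is the monotonicity of $h$, which fails in general because the factor $(S_m-S_{m-1})^2$ creates interior zeros of $h$. The entire argument hinges on the observation in the second step that the hypothesis $|x_0|\ge 2\cos\frac{\pi}{4m+2}$ is calibrated exactly so that $x_0^2-2$ sits at or beyond the largest zero $2\cos\frac{\pi}{2m+1}$ of $S_m-S_{m-1}$; this is what clears those interior zeros out of the domain $(x_0^2-2,\infty)$ and restores both the monotonicity and the clean correspondence between the values $2-t_j$ and the roots $y_j$.
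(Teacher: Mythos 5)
Your proof is correct, but it proves more than the paper does and by a different mechanism. The paper's argument is a three-line application of the intermediate value theorem that never needs monotonicity: since $g(y_1)=2-t_1<2-t_2$ and $g(y)\to+\infty$ as $y\to\infty$, there is some $y>y_1$ with $g(y)=2-t_2$, and because $y_2$ is by definition the \emph{largest} such root, $y_2\ge y>y_1$; the other inequalities follow the same way, and $y_1>x_0^2-2$ is immediate because $g\le 0$ on $(-\infty,x_0^2-2]$ while $2-t_1>0$. In particular the lemma, as the paper proves it, holds for every real $x_0$ and makes no use of the hypothesis $|x_0|\ge 2\cos\frac{\pi}{4m+2}$. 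You instead use that hypothesis to push all zeros of $S_m-S_{m-1}$ into $(-\infty,x_0^2-2]$, making $h$ a strictly increasing bijection of $(x_0^2-2,\infty)$ onto $(0,\infty)$; this buys a strictly stronger conclusion --- each $y_j$ is the \emph{unique} real solution of $h(y)=2-t_j$, not merely the largest --- at the cost of tying the lemma to the hypothesis. One caveat on your closing remark: the calibration $x_0^2-2\ge 2\cos\frac{\pi}{2m+1}$ is indeed exactly what the hypothesis provides, but in the paper it is deployed not in this lemma but in the step right after it, where it forces $S_m(y_j)>S_{m-1}(y_j)>0$ and hence pins down the sign of $f(y_j)$; the ordering of the $y_j$ itself does not hinge on it.
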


\begin{proof}
Let $g(y) = (y-x_0^2+2)(S_m(y)-S_{m-1}(y))^2$ where $y \in \BR$. We have $g(y_1)=2-t_1 < 2-t_2$. There exists $y>y_1$ such that $g(y)=2-t_2$. Hence $y_2 \ge y>y_1$. Other inequalities can be proved in a similar way.
\end{proof}

We have
$$f(y_j)=(1-\mu)S_n(t_j)=(y_j+2-x_0^2)S_m(y_j) ( S_{m}(y_j) - S_{m-1}(y_j) )S_n(t_j).$$
Since $y_j>x_0^2-2>2\cos \frac{\pi}{2m+1}$, we have $S_m(y_j)>S_{m-1}(y_j)>0.$ Moreover, $(-1)^{j-1}S_n(t_j)>0$. Hence $(-1)^{j-1} f(y_j)>0$. For each $1 \le j \le n-1$, we have $$f(y_j)f(y_{j+1})<0$$ and hence there exists $y'_j \in (y_j,y_{j+1})$ such that $f(y'_j)=0.$ There are at least $n-1$ real solutions $y$ of the equation $f(y)=0$.

\subsection{$J(2m+1,-2n)$} The knot $J(2m+1,-2n)$ is the two-bridge knot $\fb(4mn+2n+1,4mn+1)$.

\subsubsection{The knot group and the Riley polynomial}

\begin{lemma} \label{w4}
Suppose $1 \le j \le 2n(2m+1)$. Write $j=(2m+1)q+r$, where $0 \le r \le 2m$. Then $\ve_j=(-1)^{r-1}$ if $r \ge 1$, and $\ve_j=1$ if $r=0$.
\end{lemma}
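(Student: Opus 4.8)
The plan is to follow the model of Lemma~\ref{w1}: compute the floor $\lfloor (4mn+1)j/(4mn+2n+1) \rfloor$ directly after substituting $j=(2m+1)q+r$, and then split into the cases $r\ge 1$ and $r=0$, since the sign of the leftover fractional contribution differs between them.

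First I would isolate the integer part of the quotient. Writing $p=4mn+2n+1$ and using $4mn+1=p-2n$ together with $2n(2m+1)=4mn+2n=p-1$, one has
$$\frac{(4mn+1)j}{p}=j-\frac{2nj}{p}=(2m+1)q+r-\Big(q+\frac{2nr-q}{p}\Big)=2mq+r+\frac{q-2nr}{p}.$$
Thus the whole problem reduces to pinning down the sign and size of the leftover fraction $(q-2nr)/p$, exactly as in the proof of Lemma~\ref{w1}.

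Next I would record the bounds on $q$ forced by the range of $j$. The hypothesis $1\le j\le 2n(2m+1)=4mn+2n$ together with $r\ge 0$ gives $0\le q\le 2n$, and the value $q=2n$ can occur only when $r=0$ (otherwise $j>4mn+2n$). In the case $r\ge 1$ this refinement yields $q\le 2n-1$, hence $-4mn\le q-2nr\le (2n-1)-2n=-1$, so that $(q-2nr)/p\in(-1,0)$ because $4mn<p$. The floor is therefore $2mq+r-1$ and $\ve_j=(-1)^{2mq+r-1}=(-1)^{r-1}$. In the case $r=0$ one has $j=(2m+1)q$ with $1\le j$, so $1\le q\le 2n<p$, giving $(q-2nr)/p=q/p\in(0,1)$; the floor is $2mq$ and $\ve_j=(-1)^{2mq}=1$.

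I expect no serious obstacle: the content is the same elementary estimate used in Lemma~\ref{w1}, and the two cases reproduce exactly the claimed formula. The only points requiring care are the boundary case $q=2n$, which must be assigned to $r=0$, and the verification that $q-2nr>-p$ in the case $r\ge 1$ (so the leftover fraction stays strictly above $-1$ and does not drop the floor by an extra unit). Once these bounds are confirmed, the statement follows immediately.
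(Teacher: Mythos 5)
Your proof is correct and follows exactly the approach the paper takes (the paper proves only Lemma~\ref{w1} and leaves Lemma~\ref{w4} to the same method): rewrite $(4mn+1)j/(4mn+2n+1)$ as an integer plus the leftover fraction $(q-2nr)/p$, bound that fraction in $(-1,0)$ when $r\ge 1$ and in $(0,1)$ when $r=0$, and read off the floor. Your handling of the boundary case $q=2n$ (forcing $r=0$) and the lower bound $q-2nr\ge -4mn>-p$ are precisely the points of care needed, and both are verified correctly.
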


Lemma \ref{w4} implies that the Schubert presentation of the knot group of $J(2m+1,-2n)$ is $\pi_1(J(2m+1,-2n)) = \la a,b \mid wa=bw \ra$ where
\begin{eqnarray*}
w &=&  \big[ (ab^{-1})^{m}ab(a^{-1}b)^{m} \big]^{n}.
\end{eqnarray*}

\begin{proposition}
One has $\Phi_{J(2m+1,-2n)}(x,y)= S_{n}(t) - \mu S_{n-1}(t)$ where 
\begin{eqnarray*}
t &=& x^2-y-(y-2)(y+2-x^2)S_m(y)S_{m-1}(y),\\
\mu &=& 1 + (y+2-x^2)S_{m-1}(y)(S_m(y)-S_{m-1}(y)).
\end{eqnarray*}
\end{proposition}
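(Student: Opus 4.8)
The plan is to combine the standard Riley reduction with the clean block structure $w=V^{\,n}$, where $V=(ab^{-1})^{m}ab(a^{-1}b)^{m}$, so that the Chebyshev power formula does all the work.

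First I would record the reduction of the matrix equation to a single scalar equation. Write $W=\rho(w)$ with entries $W_{ij}$. Comparing the four entries of $\rho(w)\rho(a)=\rho(b)\rho(w)$, the $(1,1)$-entries agree identically, and the $(1,2)$-entry gives $W_{11}=(s-s^{-1})W_{12}$; by Riley's theorem quoted in the introduction the whole system is equivalent to this one relation, so I may take the Riley polynomial to be $\Phi_{J(2m+1,-2n)}(x,y)=W_{11}+(s^{-1}-s)W_{12}$ (this combination turns out to be invariant under $s\mapsto s^{-1}$, hence a polynomial in $x=s+s^{-1}$).

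Next I would compute $\rho(V)$ explicitly. The three building blocks are $P:=\rho(ab^{-1})$, $Q:=\rho(a^{-1}b)$ and $\rho(ab)$; a direct multiplication gives $\tr P=\tr Q=y$ and $\tr\rho(ab)=x^{2}-y$. Since $P,Q\in SL_2(\BC)$ have trace $y$, the Chebyshev identity gives $P^{m}=S_{m-1}(y)P-S_{m-2}(y)I$ and the analogous formula for $Q^{m}$. Substituting these into $\rho(V)=P^{m}\rho(ab)Q^{m}$ and expanding, each entry of $\rho(V)$ becomes a polynomial in $S_{m-1}(y),S_{m-2}(y)$ with Laurent-monomial-in-$s$ coefficients. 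Reading off $t=\tr\rho(V)=V_{11}+V_{22}$ and simplifying via the recursion $S_m(y)=yS_{m-1}(y)-S_{m-2}(y)$ together with property (1) should collapse the result to the stated $t=x^{2}-y-(y-2)(y+2-x^{2})S_m(y)S_{m-1}(y)$; in particular the terms odd in $s$ must cancel, leaving dependence only on $x^{2}=(s+s^{-1})^{2}$ and $y$.

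Finally I would pass from $V$ to $w=V^{\,n}$. Applying the Chebyshev identity again to the trace-$t$ matrix $\rho(V)$ yields $\rho(w)=\rho(V)^{\,n}=S_{n-1}(t)\rho(V)-S_{n-2}(t)I$, so $W_{11}=S_{n-1}(t)V_{11}-S_{n-2}(t)$ and $W_{12}=S_{n-1}(t)V_{12}$. Plugging these into $\Phi=W_{11}+(s^{-1}-s)W_{12}$, factoring out $S_{n-1}(t)$, and using $S_{n-2}(t)=tS_{n-1}(t)-S_n(t)$, I get $\Phi=S_n(t)-\mu S_{n-1}(t)$ with $\mu=t-\bigl(V_{11}+(s^{-1}-s)V_{12}\bigr)=V_{22}+(s-s^{-1})V_{12}$; it then remains to simplify this scalar, once more through the Chebyshev recursion and the symmetrization $s+s^{-1}=x$, into $\mu=1+(y+2-x^{2})S_{m-1}(y)(S_m(y)-S_{m-1}(y))$.

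The main obstacle is purely computational: carrying out the $2\times2$ product $P^{m}\rho(ab)Q^{m}$ and checking that, after applying property (1) and the recursion, both $t$ and $\mu$ reduce to the claimed closed forms with every asymmetric power of $s$ cancelling. A useful safeguard is to verify the extreme cases: $m=0$ forces $S_{-1}=0$, giving $t=x^{2}-y=\tr\rho(ab)$ and $\mu=1$, consistent with $w=(ab)^{\,n}$; and $m=1$ (where $P^{1}=P$, $Q^{1}=Q$) can be confirmed directly, yielding $\mu=(y^{2}+y-1)+x^{2}(1-y)=1+(y+2-x^{2})(y-1)$, in agreement with the stated formula. The only nonroutine point is the single-equation reduction of the first paragraph, which is precisely Riley's result invoked above.
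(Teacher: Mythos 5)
Your proposal is correct and follows essentially the route the paper intends: the paper omits this proof, referring to the method of \cite[Proposition 2.5]{MT}, which is exactly your scheme of reducing $\rho(wa)=\rho(bw)$ to Riley's single scalar equation, writing $\rho(w)=\rho(V)^n$ with $V=(ab^{-1})^m ab(a^{-1}b)^m$, and applying the Chebyshev identity $M^k=S_{k-1}(\tr M)M-S_{k-2}(\tr M)I$ twice. Your derivation of $\Phi=S_n(t)-\mu S_{n-1}(t)$ with $\mu=V_{22}+(s-s^{-1})V_{12}$ and your consistency checks at $m=0,1$ are sound.
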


\subsubsection{Proof of Theorem \ref{main2}(i)} Suppose $x_0 \in \BR$ satisfies $|x_0| \ge 2\cos \frac{\pi}{4m+2}$. We want to prove that the equation $\Phi_{J(2m+1,-2n)}(x_0,y)=0$ has at least $n$ real solutions $y$. To achieve this, we will show that there exist real numbers $y_0 < y_1 < y_2 < \cdots < y_n$ such that $f(y_j)f(y_{j+1})<0$ for $0 \le j \le n-1$, where $f(y)=\Phi_{J(2m+1,-2n)}(x_0,y)$.

Choose $t_j$ and $y_j$ for $1 \le j \le n$ as in the proof of Theorem \ref{main2}(i). Then $x^2-2<y_1<y_2< \cdots < y_n$. We have 
$$f(y_j)=(1-\mu)S_n(t_j)=-(y+2-x_0^2)S_{m-1}(y)(S_m(y)-S_{m-1}(y))S_n(t_j),$$
which implies that $(-1)^{j}f(y_j)>0$.

Let $y_0=x_0^2-2$. Then $f(y_0)=S_n(2)-S_{n-1}(2)=1$. For each $0 \le j \le n-1$, we have $$f(y_j)f(y_{j+1})<0$$ and hence there exists $y'_j \in (y_j,y_{j+1})$ such that $f(y'_j)=0.$ There are at least $n$ real solutions $y$ of the equation $f(y)=0$.

\end{document}